\documentclass[11pt,reqno]{amsart}

\usepackage[margin=1in]{geometry}
\usepackage{amsmath}
\usepackage{amssymb}
\usepackage{amsthm}
\usepackage{dsfont}
\usepackage{graphicx} 
\usepackage{microtype}

\usepackage{tikz}
\usetikzlibrary{shapes.geometric, positioning, fit}

\usepackage{charter}
\usepackage[cal=euler]{mathalpha}

\usepackage{xcolor}
\usepackage[backref=page]{hyperref}
\hypersetup{
    colorlinks=true,
    linkcolor=cyan!80!black,
    citecolor=cyan!80!black,
    urlcolor=magenta,
}

\usepackage{booktabs}
\usepackage{makecell}
\usepackage[symbol]{footmisc}

\usepackage{apptools}
\AtAppendix{\counterwithin{lemma}{section}}

\allowdisplaybreaks

\newcommand{\bbE}{\mathbb{E}}

\newcommand{\bbN}{\mathbb{N}}
\newcommand{\bbP}{\mathbb{P}}
\newcommand{\bbR}{\mathbb{R}}

\newcommand{\bbI}{\mathbb{I}}

\newcommand{\cN}{\mathcal{N}}

\newcommand{\cS}{\mathcal{S}}

\newcommand{\Unif}{\mathsf{Uniform}}

\renewcommand{\sc}{\mathsf{sc}}

\newcommand{\iid}{\mathrm{i.i.d.}}

\DeclareMathOperator{\Var}{Var}
\DeclareMathOperator{\Tr}{Tr}

\DeclareMathOperator{\Cov}{Cov}

\let\tilde\widetilde

\theoremstyle{plain}
\newtheorem{theorem}{Theorem}[section]
\newtheorem{corollary}{Corollary}[section]

\theoremstyle{definition}

\newtheorem{definition}{Definition}[section]

\theoremstyle{remark}

\title[Semicircular law with a few independent entries]{Semicircular law with a few independent entries in a random matrix}
\author[D. Banerjee]{Debapratim Banerjee}
\author[H. Talukdar]{Himasish Talukdar}
\address{
    Department of Mathematics\\
    Ashoka University, India.
}
\address{
    Statistics and Mathematics Unit \\
    Indian Statistical Institute \\
    203 B.T. Road, Kolkata 700108 \\
    West Bengal, India.
}
\email{debapratim.banerjee@ashoka.edu.in}
\email{talukdar.himasish@gmail.com}

\begin{document}

\begin{abstract}
It is well known in random matrix literature that the limiting spectral distribution of a Wigner matrix is the semi circular law while the limiting spectral distributions of other patterned matrices like Toeplitz, Hankel, symmetric circulant and reverse circulant matrices have unbounded supports. One fundamental difference between the Wigner matrices and the other matrices mentioned above is that the Wigner matrices have $O(n^2)$ independent random variables while the others have $O(n)$ independent random variables. In this paper, we show that this is not true in general. In particular, we form matrices with $O(n)$ independent random variables whose empirical spectral distributions are arbitrary close to the semi circular law.
\end{abstract}

\keywords{Wigner's Semicircle law, patterned random matrices, permutation matrices}

\maketitle
\thispagestyle{empty}

\section{Introduction}
The amazing journey of studying random matrix theory began with the famous Wigner's semicircular law in the seminal papers of Wigner \cite{wig1} and \cite{wig2}. Essentially, the result states that if someone takes an $n \times n$ matrix $W= \frac{1}{\sqrt{n}}\left( x_{i,j} \right)_{1\le i,j \le n}$ such that the upper diagonal part of $W$ is formed by i.i.d. mean $0$ variance $1$ random variables with all moments finite, then the empirical distribution of the eigenvalues of $W$ converges weakly to the semi circular law in almost sure sense. Here, the semicircular law is a probability distribution supported on $[-2,2]$ having the following density:
\begin{equation}\label{eq:semicircular}
 f(x)=\left\{
 \begin{array}{ll}
 \frac{1}{2\pi} \sqrt{4-x^2} & \text{when~ $|x|\le 2$}\\
 0 & \text{otherwise.} 
 \end{array}
 \right.
\end{equation}
Later, various patterned matrices such as the Toeplitz and Hankel matrices received significant interest in the community. A Toeplitz matrix $T=\frac{1}{\sqrt{n}}(t_{i,j})_{1\le i,j \le n}$ is an $n \times n$ matrix where $t_{i,j}= x_{|i-j|}$. Here, $x_{0},\ldots , x_{n-1}$ is a collection of $n$ i.i.d. mean $0$ variance $1$ random variables with all moments finite. Similarly, a Hankel matrix $H= \frac{1}{\sqrt{n}}\left( h_{i,j} \right)_{1\le i,j \le n}$ is an $n \times n$ matrix where $h_{i,j}= x_{i+j}$. Here $x_{2},\ldots , x_{2n}$ is a collection of $2n-1$ i.i.d. mean $0$ variance $1$ random variables with all moments finite. The study of such matrices is significantly harder than the Wigner matrices, as they are formed by fewer independent random variables. To the best of our knowledge, the explicit formulas of the limit of the empirical spectral distribution of these matrices are not yet known. However, it is known that the limit exists and is supported on the entire real line unlike the Wigner matrix. In addition to these two matrices, other patterned matrices with $O(n)$ entries, such as  reverse circulant and symmetric circulant matrices, also have unbounded support of the limiting spectral distribution. For a detailed and elementary study of this literature, one might look at the following nice book by Prof. Arup Bose \cite{bose2018patterned}. While D.B. was a master's student of Prof. Bose, he was asked an intriguing question by Prof. Bose which gave rise to this note. His question was whether all matrices formed by $O(n)$ many independent random variables have unbounded limiting spectral distribution if it exists. Considering this problem, we observed that the fundamental reason that existing patterned matrices have unbounded LSD is not because they have $O(n)$ many  independent random variables, but intrinsic patterns of the repetitions themselves. In particular, in a Hankel matrix, the same random variables appear on the lines parallel to cross diagonal and in a Toeplitz matrix, same random variables appear on the lines parallel to the diagonal. These structures give rise to higher values of the moments of the ESD. In contrast, if this structure is destroyed, then the empirical spectral distribution is again close to the semicircular law as stated in Corollary \ref{cor:conditional_lsd_X}.  
\section{The model}
Suppose, we have $n$ independent random variables $\xi_1, \ldots, \xi_n$ with $\bbE \xi_j:= 0$ and $\Var \xi_j = 1$ for $j \in [n]$. Moreover, assume that for $r \in \bbN$, $M_r:= \sup_{i} \bbE [|\xi_i|^r] < \infty$. Let $\cS_n$ denote the set of bijections of $[n]$. So, $|\cS_n| = n!$. Suppose $\Phi_1, \Phi_2, \ldots, \Phi_n$ are $n$-many $\iid$ samples from $\Unif(\cS_n)$. Now, for $i, j \in [n]$, define
\begin{equation}
    Y_{ij}:= \xi_{\Phi_i(j)}.
\end{equation}
Finally, define the $n \times n$ symmetric random matrix $X_n$ with entries
\begin{equation}
    X_{ij} := \frac{1}{\sqrt{2}} (Y_{ij} + Y_{ji}). 
\end{equation}
We are interested in the spectrum of the matrix $X$ as $n$ grows. Clearly, $Y_{ij}$'s are centered. Moreover, for $i, j, k, l \in [n]$, one gets that
\begin{align*}
    \Cov[Y_{ij}, Y_{kl}] &= \bbP [\Phi_i(j) = \Phi_k(l)]\\
    &= \delta_{ik} \delta_{jl} + \frac{1}{n} (1 - \delta_{ik})\\
    &= \frac{1}{n} - \frac{1}{n} \delta_{ik} + \delta_{ik} \delta_{jl}.  
\end{align*}
This immediately leads to
\begin{align*}
    \Var Y_{ij} &= 1\\
    \Cov[Y_{ij}, Y_{kl}] &= \begin{cases}
        \frac{1}{n} \qquad \text{if}\,\, i \ne k\\
        0 \qquad \text{if} \,\, i =k, j \ne l.
    \end{cases}
\end{align*}
Now note that $\bbE X_{ij} = 0$ and 
\begin{align*}
    \Cov[X_{ij}, X_{kl}] &= \frac{1}{2} \left( \Cov[Y_{ij}, Y_{kl}] + \Cov[Y_{ij}, Y_{lk}] + \Cov[Y_{ji}, Y_{kl}] + \Cov[Y_{ji}, Y_{lk}] \right) \\
    &= \frac{1}{2} \left( \frac{1}{n} - \frac{1}{n} \delta_{ik} + \delta_{ik} \delta_{jl} \right) + \frac{1}{2} \left( \frac{1}{n} - \frac{1}{n} \delta_{il} + \delta_{il} \delta_{jk} \right) \\
    &+ \frac{1}{2} \left( \frac{1}{n} - \frac{1}{n} \delta_{jk} + \delta_{jk} \delta_{il} \right) + \frac{1}{2} \left( \frac{1}{n} - \frac{1}{n} \delta_{jl} + \delta_{jl} \delta_{ik}\right) \\
    &= \frac{2}{n} - \frac{1}{2n} (\delta_{ik} + \delta_{il} + \delta_{jk} + \delta_{jl}) + \frac{1}{2} (\delta_{ik} \delta_{jl} + \delta_{il} \delta_{jk} + \delta_{jk} \delta_{il} + \delta_{jl} \delta_{ik}),
\end{align*}
in particular,
\begin{align*}
    \Var(X_{ij}) &= \left( 1 + \frac{1}{n} \right) + \left( 1 - \frac{1}{n} \right) \delta_{ij} \\
    &= \begin{cases}
        2 &\text{if}\,\, i = j, \\
        1 + \frac{1}{n} &\text{if}\,\, i \ne j.
    \end{cases}
\end{align*}
So, 
\begin{align*}
    \bbE \Tr(X_n^2) &= \sum_{i j } \Var(X_{ij}) \\
    &= 2n + (n^2 -n) \left( 1 + \frac{1}{n} \right) \\
    &= n^2 + 2n -1,
\end{align*}
which leads to
\[
    \lim_{n \to \infty} \frac{1}{n^2} \bbE \Tr(X_n ^2) = 1.
\]
So, we rescale $X_n$ by $\sqrt{n}$ and consider the matrix 
\[
    \tilde{X}_n := \frac{1}{\sqrt{n}} X_n.
\]

\section{Preliminaries}
In this section we introduce the necessary concepts of random matrix theory. Let $A$ be a symmetric $n \times n$ matrix. Denote its (real) eigenvalues by $\lambda_1(A), \ldots, \lambda_n(A)$ in non-increasing order. Denote by $\mu_A$ the \emph{spectral measure} of $A$, i.e.,
\[
    \mu_A := \frac{1}{n} \sum_{i=1}^n \delta_{\lambda_i(A)}.
\]
Now, assume that $A$ is a random matrix. Now, suppose $(A_n)_{n \in \bbN}$ is a sequence of random matrices. Then, we say $A_n$ converges weakly in probability to a measure $\mu$ if for all bounded continuous functions $f$ and $\varepsilon >0$,
\[
    \bbP \left[\left| \int f \,d\mu_{A_n} - \int f \, d\mu \right| > \varepsilon \right] \to 0.
\]
Then $\mu_A$ is a random measure. Define the \emph{expected empirical spectral measure} (EESD) of $A$, denoted by $\overline{\mu}_A$, through its action on test functions $\phi \in C_b(\bbR)$:
\[
    \int \phi(x) \, \overline{\mu}_A (dx) = \bbE \left[ \frac{1}{n} \sum_{i=1}^n \phi (\lambda_{i}(A) ) \right],
\]
where 
\[
    C_b(\bbR) := \{ \phi:\bbR \to \bbR\, \vert \, \phi\,\,\text{is bounded and continuous} \}.
\]
In this paper, we are interested in the LSD of the matrix $\tilde{X}_n$.
\section{Main results}
We are now ready to state our main result. 
\begin{theorem}[LSD of $\tilde{X}_n$] \label{thm:lsd_X}
    If all moments of $\xi_1$ are finite, then the ESD of $\tilde{X}_n$ converges weakly in probability to the standard semicircle law.
\end{theorem}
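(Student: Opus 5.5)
We will use the moment method. We show that for every fixed $k\in\bbN$,
\[
    \frac{1}{n}\bbE\Tr(\tilde{X}_n^{k}) \to m_k, \qquad \Var\Big(\frac{1}{n}\Tr(\tilde{X}_n^{k})\Big)\to 0,
\]
where $m_k$ is the $k$-th moment of the semicircle law: zero for odd $k$ and the Catalan number $C_{k/2}$ for even $k$. The semicircle law is determined by its moments, so Chebyshev's inequality together with the standard moment-method argument gives weak convergence in probability.

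\textbf{Expectation.} It is convenient to condition on the permutations $\Phi=(\Phi_1,\dots,\Phi_n)$. Expand $X_{ij}=(Y_{ij}+Y_{ji})/\sqrt2$, so that
\[
    \frac{1}{n}\Tr(\tilde{X}_n^{k})=\frac{1}{n^{1+k/2}2^{k/2}}\sum_{i_1,\dots,i_k}\ \sum_{\text{orientations}}\prod_{t=1}^k \xi_{\Phi_{a_t}(b_t)},
\]
where each ordered pair $(a_t,b_t)$ is $(i_t,i_{t+1})$ or $(i_{t+1},i_t)$. Given $\Phi$, the expectation of a term is nonzero only if every value $\Phi_{a_t}(b_t)$ is hit at least twice.

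We split the coincidences into two kinds. \emph{Structural coincidences} are those with $(a_t,b_t)=(a_s,b_s)$, which occur exactly as in a Wigner matrix. \emph{Accidental coincidences} are those with $a_t\neq a_s$ but $\Phi_{a_t}(b_t)=\Phi_{a_s}(b_s)$. Within a single row $a$, $\Phi_a$ is a bijection, so $a_t=a_s$ with $b_t\ne b_s$ never gives a coincidence.

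For fixed indices, the probability of any prescribed set of accidental coincidences is a product of factors $1/n$, one per independent constraint. This holds because the permutations attached to distinct rows are independent and uniform, and a single uniform permutation evaluated at $r$ distinct points is close to $r$ uniform distinct values. The moment bounds $M_r$ control the $\xi$ factors.

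We then run the usual graph counting on the closed walk $i_1\to\cdots\to i_k\to i_1$ with $v$ distinct vertices:
\begin{itemize}
\item Terms whose walk, with structural coincidences only, has every edge traversed exactly twice and $v=k/2+1$ give the Catalan count.
\item The orientation sum contributes $2^{k/2}$ to these terms, because each edge pair must match with consistent orientation. This exactly cancels the normalisation.
\end{itemize}
Every other configuration has a deficit, and we must show it is $o(1)$. The key estimate is: each accidental identification gains at most one free vertex compared with the structural pairing, but costs a probability factor $1/n$. Hence the total contribution is at most $n^{v}\cdot n^{-(\#\text{accidental})}$, and this is $\le n^{k/2}$ with equality only in the Wigner-type case. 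We need a careful formulation: merge all edges sharing a $\xi$-label into clusters, then count the connected components of the resulting "label graph" against vertices.

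\textbf{Variance.} Here we compute $\bbE[\Tr^2]-(\bbE\Tr)^2$ over pairs of walks. Independence fails only through shared $\xi$-labels, either structural or accidental. Any correlated pair must share at least one label, and this costs either a vertex or a factor $1/n$. The standard Wigner argument then gives a variance of order $O(1/n^2)$.

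\textbf{Main obstacle.} We expect the hardest step to be the combinatorial bound that accidental coincidences never increase the leading order. The difficulty is that an accidental match between two rows $a\ne a'$ can let a walk visit an edge only once while still pairing its $\xi$ with another edge, which frees up a vertex.

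We will first handle this with a dependency argument:
\begin{itemize}
\item Process the edges in walk order, and declare a new vertex "free" only if it is not forced.
\item Show that each accidental equality imposes the event $\Phi_a(b)=\Phi_{a'}(b')$.
\item Exposing the permutations row by row, this event has conditional probability at most $1/(n-k)$ regardless of the previous exposures.
\end{itemize}
This reduces the count to a Wigner-type count in which an accidental pair behaves like an ordinary pair, except that it carries a factor $n^{-1}$ in place of an identification of endpoints. Such a pair therefore contributes at most what a genuine pair would.

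Odd moments vanish asymptotically by the same count, since some label would have to appear at least three times, which forces a deficit.
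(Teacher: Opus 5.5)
Your route is genuinely different from the paper's. The paper does no walk counting. It invokes Theorem~5 of Hochst\"attler--Kirsch--Warzel, a semicircle criterion for dependent ensembles, and verifies only two mixed-moment conditions. The first is $|\bbE[\prod_{u\le p}X_{i_uj_u}\prod_{v\le q}X_{k_vl_v}]|\le C_{pq}n^{-p/2}$ when the $p$ singly-occurring entries have distinct indices disjoint from those of the other factors. The second is $\bbE[\prod_u X_{i_uj_u}^2]\to 1$. Both rest on the mechanism you identify:
\begin{itemize}
\item coincidences inside a row are impossible;
\item each coincidence across distinct rows costs $1/n$;
\item a label cluster containing none of the $X_{k_vl_v}$ needs two singly-occurring entries, which gives $n^{-p+m_2}$ with $m_2\le p/2$.
\end{itemize}
So the paper is short and delegates both the combinatorics and the upgrade to convergence in probability to the cited theorem. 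Your argument is self-contained and exhibits the leading terms explicitly.

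Your ``main obstacle'' closes in one line. Let $E'$ be the number of distinct ordered pairs $(a_t,b_t)$ and $c$ the number of label clusters. Then $v\le E'+1$, since the walk graph is connected with at most $E'$ edges, and $c\le k/2$, since every cluster has multiplicity $\ge 2$. Row-by-row exposure bounds the probability by $(n-k)^{-(E'-c)}$. Hence a configuration contributes $O(n^{v-E'+c})=O(n^{k/2+1})$, not $n^{k/2}$ as you wrote. Equality holds only for tree walks with consistent orientations in which every cluster is a single ordered pair used twice.

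One claim needs correcting: the variance is not $O(1/n^2)$. Two \emph{vertex-disjoint} walks are correlated through the common vector $\xi$. Given $\Phi$, they become dependent as soon as a label of one accidentally equals a label of the other. That costs a factor $1/n$ but no vertex, so the $\asymp n^{k+2}$ such pairs of Wigner-type walks contribute order $1/n$. Indeed
\[
  \frac1n\Tr(\tilde{X}_n^2)=\frac1n\sum_{j=1}^n\xi_j^2+\frac1{n^2}\sum_{i,j}Y_{ij}Y_{ji},
\]
and the first term has variance $n^{-2}\sum_j\Var(\xi_j^2)$. This is of exact order $1/n$ unless the $\xi_j^2$ are essentially constant. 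Walks sharing a vertex but no label can also be correlated, since both conditional means depend on the same $\Phi_a$, but that costs a vertex. The correct bookkeeping uses $v\le E'+2$ for the union of two walks and $c\le k$, and gives $\Var(\frac1n\Tr\tilde{X}_n^k)=O(1/n)$. That still suffices for Chebyshev, so your conclusion is unaffected.
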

\begin{definition}[Bounded-Lipschitz distance]
    Let $\mu, \nu$ be two probability measures on $(\bbR, \mathcal{B}(\bbR))$. The bounded-Lipshcitz distance between $\mu$ and $\nu$ is defined as 
    \[
        d_\mathrm{BL}(\mu, \nu) := \sup_{\substack {\|f\|_\infty \le 1, \\ \mathrm{Lip}(f) \le 1}} \left| \int_\bbR f\, d\mu - \int_\bbR f \, d\nu \right|,
    \]
    where
    \begin{align*}
        \mathrm{Lip}(f) &:= \sup_{x \ne y} \frac{|f(x) - f(y)|}{d(x, y)},\\
        \|f\|_{\infty} &:= \sup_{x\in \bbR} |f(x)|.
    \end{align*}
\end{definition}
The bounded-Lipschitz metric metrizes the topology of weak convergence. In other words, Theorem~\ref{thm:lsd_X} is equivalent to:
\[
    d_{\mathrm{BL}} (\mu_{\tilde{X}}, \mu_\sc) \overset{\bbP}{\rightarrow} 0.
\]
This interpretation of Theorem~\ref{thm:lsd_X} leads to the following easy corollary:
\begin{corollary}[Conditional LSD]\label{cor:conditional_lsd_X}
    For all $\varepsilon, \delta, \eta > 0$, there exists $N \in \bbN$ and a sequence of sets $A_n \subseteq \cS_n^{\otimes n}$ with $ |A_n| > (1- \eta) (n!)^n$ for all $n \ge N$, such that 
    \[
        \bbP \left[d_\mathrm{BL} \left(\mu_{\tilde{X}_n}, \mu_\sc \right) > \varepsilon \bigg| \Phi_1= \phi_1, \ldots, \Phi_n = \phi_n \right] \le \delta,
    \]
    for all $(\phi_1, \ldots, \phi_n) \in A_n$.
\end{corollary}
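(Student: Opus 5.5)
The corollary should follow from Theorem~\ref{thm:lsd_X}, read through the bounded-Lipschitz metric, by disintegrating over the permutations and applying Markov's inequality. Write $\Phi = (\Phi_1,\ldots,\Phi_n)$, which is uniform on $\cS_n^{\otimes n}$ and independent of $(\xi_j)$. Define the conditional probability
\[
g_n(\phi) := \bbP\left[ d_\mathrm{BL}\left(\mu_{\tilde{X}_n}, \mu_\sc\right) > \varepsilon \,\Big|\, \Phi = \phi \right], \qquad \phi \in \cS_n^{\otimes n}.
\]
Since $\Phi$ takes finitely many values, each with probability $(n!)^{-n}>0$, this is an elementary conditional probability. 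Concretely, it equals the probability, over $\xi$ alone, of the event for the matrix built from the fixed permutations $\phi$. No measurability issues arise: $d_\mathrm{BL}(\mu_{\tilde X_n},\mu_\sc)$ is a measurable function of the eigenvalues, which depend continuously on $\xi$.

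\textbf{Step 1 (averaging).} By the law of total probability,
\[
\frac{1}{(n!)^n}\sum_{\phi \in \cS_n^{\otimes n}} g_n(\phi) = \bbP\left[d_\mathrm{BL}(\mu_{\tilde X_n},\mu_\sc) > \varepsilon\right] =: p_n .
\]
By Theorem~\ref{thm:lsd_X} and the equivalence $d_\mathrm{BL}(\mu_{\tilde X_n},\mu_\sc)\overset{\bbP}{\to}0$ stated before the corollary, we have $p_n \to 0$.

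\textbf{Step 2 (Markov).} Set $A_n := \{\phi : g_n(\phi) \le \delta\}$. Markov's inequality applied to the random variable $g_n(\Phi)$ gives
\[
\frac{|A_n^c|}{(n!)^n} = \bbP[g_n(\Phi) > \delta] \le \frac{p_n}{\delta}.
\]
Choose $N$ so that $p_n < \eta\delta$ for all $n \ge N$. Then $|A_n^c| < \eta (n!)^n$, so $|A_n| > (1-\eta)(n!)^n$. By construction, every $\phi \in A_n$ satisfies the required conditional bound. For $n<N$ the sets $A_n$ can be taken arbitrary, since the statement only concerns $n\ge N$.

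\textbf{Main obstacle.} Given Theorem~\ref{thm:lsd_X}, there is essentially no difficulty beyond the metrization fact. That fact is needed to convert weak convergence in probability into convergence of the single scalar $d_\mathrm{BL}$ in probability. Once this is done, the uniform-over-$\varepsilon$ issue disappears because $\varepsilon$ is fixed. The genuine work therefore lies in the theorem itself. I expect it to be proved by the moment method, showing $\frac1n\bbE\Tr \tilde X_n^{k}\to$ Catalan numbers and that the variance is $O(n^{-2})$. The delicate part there is controlling walks in which distinct edges share a $\xi$-label through coincidences $\Phi_i(j)=\Phi_k(l)$, which occur with probability $O(1/n)$ each. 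That part is not needed for the corollary.
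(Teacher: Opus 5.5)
Your proposal is correct and follows the paper's proof: both apply Markov's inequality to the conditional probability $\bbP[d_\mathrm{BL}(\mu_{\tilde X_n},\mu_\sc)>\varepsilon \mid \Phi_1,\ldots,\Phi_n]$, whose mean tends to $0$ by Theorem~\ref{thm:lsd_X}. You also spell out the choice $A_n=\{\phi: g_n(\phi)\le\delta\}$ and an $N$ with $p_n<\eta\delta$, which the paper leaves implicit.
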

The proofs of Theorem~\ref{thm:lsd_X} and Corollary~\ref{cor:conditional_lsd_X} are postponed to Section~\ref{proofs}.
\section{Simulations}
\begin{figure}[htbp]
    \centering
    \includegraphics[width=0.6\textwidth]{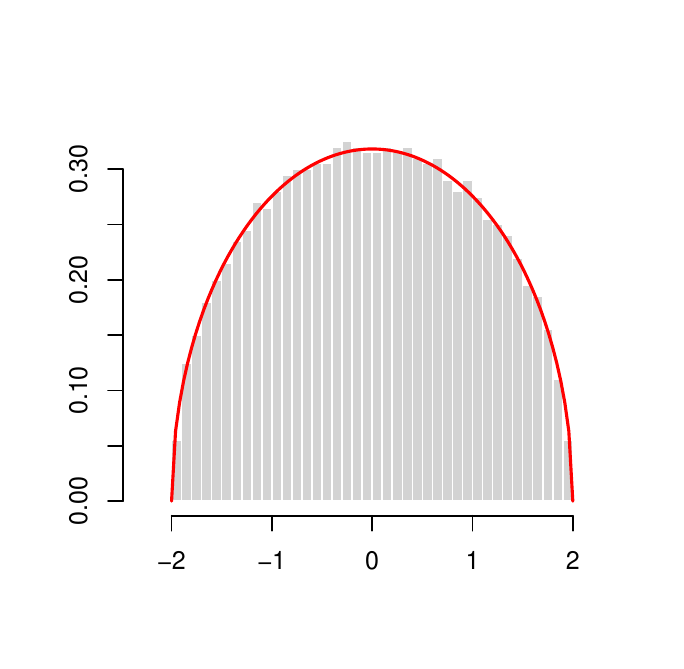}
    \caption{Empirical spectral distribution}
    \label{fig:eigenvalue-histogram}
\end{figure}
In figure~\ref{fig:eigenvalue-histogram}, we plot the histogram of the eigenvalues of the matrix $\tilde{X}_{n}$ for $n=2000$ with $\xi_i \overset{\iid}{\sim} \cN(0,1)$. The red curve is the density of the standard semicircle law which perfectly fits the histogram.
\section{Proofs}\label{proofs}
\begin{proof}[Proof of Theorem~\ref{thm:lsd_X}]
    The main ingredient of the proof is Theorem~5 of \cite{hochstattler2015semicircle}. By virtue of this theorem, it is enough to show that for $p, q \in \bbN$, distinct indices $i_1, \ldots, i_p, j_1, \ldots, j_p \in [n]$ and (possibly equal) indices $k_1, \ldots, k_q, l_1, \ldots, l_q \in [n]$ with $\{ i_1, \ldots, i_p, j_1, \ldots, j_p \} \cap \{ k_1, \ldots, k_q, l_1, \ldots, l_q \} = \emptyset$,
    \begin{align}
        &\bbE \left[ \prod_{u=1}^p X_{i_u j_u} \prod_{v=1}^q X_{k_v l_v} \right] \le \frac{C_{pq}}{n^{\frac{p}{2}}} \label{eq:cond_1}\\
        &\bbE \left[ \prod_{u=1}^p X_{i_u j_u}^2 \right] \to 1, \label{eq:cond_2}
    \end{align}
    for some ($n$-independent) constant $C_{pq}$. We prove them now.\\
    \textbf{Proof of \eqref{eq:cond_1}.}
    We denote by $\{\{ a_1, \ldots, a_m\}\}$ a multiset with (possibly repeated) elements $a_1, \ldots, a_m$. A multiset is called \emph{matched} if each element has multiplicity at least $2$. Define $\overline{\Phi}_1, \overline{\Phi}_2: [n]^2 \to [n]$ defined as
    \begin{align*}
        \overline{\Phi}_1 (i, j) &:= \Phi_i(j) \\
        \overline{\Phi}_2 (i, j) &:= \Phi_j(i)
    \end{align*}
    for $i, j \in [n]$. Clearly, 
    \[
        X_{i j} = \frac{1}{\sqrt{2}}\left( \xi_{\overline{\Phi}_1 (i,j)} + \xi_{\overline{\Phi}_2 (i,j)} \right).
    \]
    Now, notice that
    \begin{align*}
        & \bbE \left[ \prod_{u=1}^p X_{i_u j_u} \prod_{v=1}^q X_{k_v l_v} \right]\\
        &= \frac{1}{2^{\frac{p+q}{2}}} \sum_{\substack{t_1, \ldots, t_p=1 \\ s_1, \ldots, s_q = 1}}^2 \bbE \left[ \prod_{u=1}^p \xi_{\overline{\Phi}_{t_u}(i_u, j_u)} \prod_{v=1}^q \xi_{\overline{\Phi}_{s_v}(k_v, l_v)} \right].
    \end{align*}
    Here, we prove the desired bound for the case $t_1= \cdots = t_p = s_1 = \cdots = s_q =1$. The other cases will yield the same bound and the LHS of \eqref{eq:cond_1} will be bounded by $2^{\frac{p+q}{2}}$-times this bound.
    \begin{align*}
        & \bbE \left[ \prod_{u=1}^p \xi_{\overline{\Phi}_{1}(i_u, j_u)} \prod_{v=1}^q \xi_{\overline{\Phi}_{1}(k_v, l_v)} \right]\\
        &= \left| \bbE \left[ \prod_{u=1}^p \xi_{\Phi_{i_u}(j_u)} \prod_{v=1}^q \xi_{\Phi_{k_v}(l_v)} \right] \right| \\
        &= \left| \bbE \left[ \prod_{u=1}^p \xi_{\Phi_{i_u}(j_u)} \prod_{v=1}^q \xi_{\Phi_{k_v}(l_v)} \bbI(\{\{ \Phi_{i_1} (j_1), \ldots, \Phi_{i_p} (j_p), \Phi_{k_1}(l_1), \ldots, \Phi_{k_q}(l_q)\}\} \,\, \text{is matched}) \right] \right| \\
        &= \bigg| \bbE \bigg[ \bbI(\{\{ \Phi_{i_1} (j_1), \ldots, \Phi_{i_p} (j_p), \Phi_{k_1}(l_1), \ldots, \Phi_{k_q}(l_q)\}\} \,\, \text{is matched}) \\
        & \cdot \bbE \bigg[ \prod_{u=1}^p \xi_{\Phi_{i_u}(j_u)} \prod_{v=1}^q \xi_{\Phi_{k_v}(l_v)} \bigg| \Phi_{i_1}, \ldots, \Phi_{i_p}, \Phi_{k_1}, \ldots, \Phi_{k_q} \bigg]\bigg] \bigg|\\
        &\le \left( \prod_{u=1}^p \bbE[|\xi_{\Phi_{i_u}(j_u)}|^{p+q}] \prod_{v=1}^q \bbE[|\xi_{\Phi_{k_v}(l_v)}|^{p+q}] \right)^{\frac{1}{p+q}} \\
        &\cdot \bbP [\{\{ \Phi_{i_1} (j_1), \ldots, \Phi_{i_p} (j_p), \Phi_{k_1}(l_1), \ldots, \Phi_{k_q}(l_q)\}\} \,\, \text{is matched})] \qquad [\text{By H\"{o}lder's inequality}]\\
        &= M_{p+q} \cdot \bbP [\{\{ \Phi_{i_1} (j_1), \ldots, \Phi_{i_p} (j_p), \Phi_{k_1}(l_1), \ldots, \Phi_{k_q}(l_q)\}\} \,\, \text{is matched})] 
    \end{align*}
    Note that the number of partitions of the set $[p + q]$ is bounded by $e \cdot (p+q)^{p+q}$. Now fix some matched partition $A_1, \ldots, A_m$ of $[p+q]$. So, $|A_s| \ge 2$ for all $s$. Define $A^{(1)}_s:= [p] \cap A_s$ and $A^{(2)}_s := ([p+q] \setminus [p]) \cap A_s$. Also, define $r_s := |A^{(1)}_s|$. Then, $\sum_{s=1}^m r_s = p$. Suppose for each fixed $s \in [m]$, the numbers $\Phi_{i_u}(j_u), u \in A^{(1)}_s$ and $\Phi_{k_v}(l_v), v \in A^{(2)}_s$ are all equal. Notice that
    \begin{align*}
        &\bbP[\Phi_{i_u}(j_u), u \in A^{(1)}_s\,\, \text{and}\,\, \Phi_{k_v}(l_v), v \in A^{(2)}_s\,\, \text{are all equal}] \\
        &\le n^{- |\{i_u : u \in A^{(1)}_s\} \cup \{k_v: v \in A^{(2)}_s\}| + 1}\\
        &= n^{- |\{i_u : u \in A^{(1)}_s\}| - |\{k_v: v \in A^{(2)}_s\}| + 1} \qquad [\text{since}\,\, \{i_u: u \in [p]\} \cap \{k_v: v \in [q]\} = \emptyset]\\
        &= n^{- r_s - |\{k_v: v \in A^{(2)}_s\}| + 1}.
    \end{align*}
    Define
    \begin{align*}
        m_1 &:= | \{ s: A^{(2)}_s \ne \emptyset \}|, \\
        m_2 &:= | \{ s: A^{(2)}_s = \emptyset \}|.
    \end{align*}
    Clearly, $m = m_1 + m_2$. Also since the partition is paired, $m_2$ can be alternatively expressed as
    \[
        m_2 =  | \{ s: |A^{(1)}_s| \ge 2 \}|.   
    \]
    So, it follows that
    \[
        m_2 \le \frac{p}{2}.
    \]
    Now, by the union bound
    \begin{align*}
        &\bbP [\{\{ \Phi_{i_1} (j_1), \ldots, \Phi_{i_p} (j_p), \Phi_{k_1}(l_1), \ldots, \Phi_{k_q}(l_q)\}\} \,\, \text{is matched})] \\
        &\le e \cdot (p+q)^{p+q} \cdot \prod_{s=1}^m n^{- r_s - |\{k_v: v \in A^{(2)}_s\}| + 1}\\
        &= e \cdot (p+q)^{p+q} \cdot n^{ - \sum_{s=1}^m r_s - \sum_{s=1}^m |\{k_v: v \in A^{(2)}_s\}| + m} \\
        &= e \cdot (p+q)^{p+q} \cdot n^{-p - \sum_{s=1}^m |\{k_v: v \in A^{(2)}_s\}| + m} \\
        &\le e \cdot (p+q)^{p+q} \cdot n^{-p - m_1 + m} \qquad [\text{since}\,\, \sum_{s=1}^m |\{k_v: v \in A^{(2)}_s\}| \ge m_1] \\
        &= e \cdot (p+q)^{p+q} \cdot n^{-p + m_2} \\
        &\le \frac{e \cdot (p+q)^{p+q}}{n^{\frac{p}{2}}} \qquad \left[\text{since}\,\, m_2 \le \frac{p}{2} \right].
    \end{align*}
    This finishes the proof of \eqref{eq:cond_1}.\\
    \textbf{Proof of \eqref{eq:cond_2}.} Define the event 
    \[
        E:= \{\Phi_{i_u}(j_u) = \Phi_{i_v}(j_v)\,\, \text{for some}\,\, u \ne v\}.
    \]
    Then,  
    \begin{align*}
        \bbP[E] &\le \binom{p}{2} \cdot \bbP[\Phi_{i_1}(j_1) = \Phi_{i_2} (j_2)] \qquad [\text{by union bound}]\\
        &\le \binom{p}{2} \cdot \frac{1}{n}.  
    \end{align*}
    Define $\overline{\Phi}_3 : [n]^2 \to [n]$
    Now, notice that
    \begin{align*}
        & \bbE \left[ \prod_{u=1}^p X_{i_u j_u}^2 \right] \\
        &= \frac{1}{2^p} \bbE \left[ \prod_{u=1}^p \left( \xi_{\Phi_{i_u}(j_u)} + \xi_{\Phi_{j_u}(i_u)} \right)^2 \right] \\
        &= \frac{1}{2^p} \bbE \left[ \prod_{u=1}^p \left( \xi_{\Phi_{i_u}(j_u)}^2 + \xi_{\Phi_{j_u}(i_u)}^2 + 2 \xi_{\Phi_{i_u}(j_u)} \cdot \xi_{\Phi_{j_u}(i_u)}\right) \right] \\
        &= \frac{1}{2^{p}} \bbE \left[ \prod_{u=1}^p \left( \xi_{\overline{\Phi}_1 (i_u, j_u)} + \xi_{\overline{\Phi}_2 (i_u, j_u)} \right)^2 \right] + O \left( \frac{1}{\sqrt{n}} \right) \qquad[\text{by}\,\, \eqref{eq:cond_1}]\\
        &= \frac{1}{2^{p}} \bbE \left[ \prod_{u=1}^p \left( \xi_{\overline{\Phi}_1 (i_u, j_u)}^2 + \xi_{\overline{\Phi}_2 (i_u, j_u)}^2 \right) \right] + O \left( \frac{1}{\sqrt{n}} \right) \\
        &= \frac{1}{2^{p}} \sum_{t_1, \ldots, t_p =1}^2 \bbE \left[ \prod_{u=1}^p \xi_{\overline{\Phi}_{t_u} (i_u, j_u)}^2 \right] + O \left( \frac{1}{\sqrt{n}} \right).
    \end{align*}
    It is enough to investigate the case when $t_1 = \cdots = t_p = 1$, and prove that 
    \[
        \bbE \left[ \prod_{u=1}^p \xi_{\overline{\Phi}_1 (i_u, j_u)}^2 \right] \to 1.
    \]
    Similarly, the other terms yield the same limit. Now, 
    \begin{align*}
        &\bbE \left[ \prod_{u=1}^p \xi_{\overline{\Phi}_1(i_u, j_u)}^2 \right] \\
        &= \bbE \left[ \prod_{u=1}^p \xi_{\Phi_{i_u}(j_u)}^2 \right]\\
        &= \bbE \left[ \prod_{u=1}^p \xi_{\Phi_{i_u}(j_u)}^2 \bbI(E) \right] + \bbE \left[ \prod_{u=1}^p \xi_{\Phi_{i_u}(j_u)}^2 \bbI(E^c) \right]\\
        &= \bbE \left[ \prod_{u=1}^p \xi_{\Phi_{i_u}(j_u)}^2 \bbI(E) \right] + \bbP[E^c]\\
        &= 1 + \bbE \left[ \prod_{u=1}^p \xi_{\Phi_{i_u}(j_u)}^2 \bbI(E) \right] - \bbP[E].
    \end{align*}
    Now 
    \begin{align*}
        & \left|\bbE \left[ \prod_{u=1}^p X_{i_u j_u}^2 \right] - 1 \right| \\
        &\le \bbE \left[ \prod_{u=1}^p |\xi_{\Phi_{i_u}(j_u)}|^2 \bbI(E) \right] + \bbP[E]\\
        &\le \left( \prod_{u =1}^p \bbE[|\xi_{\Phi_{i_u}(j_u)}|^{2p}] \right)^{\frac{1}{p}} \cdot \bbP[E] + \bbP[E] \qquad [\text{by H\"{o}lder's inequality}]\\
        &\le (M_{2p} + 1) \cdot \binom{p}{2} \frac{1}{n} \to 0.
    \end{align*}
    Hence \eqref{eq:cond_2} is proved. This finishes the proof of Theorem~\ref{thm:lsd_X}.
\end{proof}
\begin{proof}[Proof of Corollary~\ref{cor:conditional_lsd_X}]
    From Theorem~\ref{thm:lsd_X}, it follows that 
    \[
        \bbP \left[ d_\mathrm{BL} \left( \mu_{\tilde{X}_n}, \mu_\sc \right) > \varepsilon \right] \to 0,
    \]
    as $n \to \infty$. By an application of Markov's inequality, it follows that
    \[
        \bbP \left[ \bbP \left[d_\mathrm{BL} \left(\mu_{\tilde{X}_n}, \mu_\sc \right) > \varepsilon \bigg| \Phi_1, \ldots, \Phi_n \right] >\delta \right] \le \frac{1}{\delta} \bbP \left[ d_\mathrm{BL} \left( \mu_{\tilde{X}_n}, \mu_\sc \right) > \varepsilon \right] \to 0.
    \]
    Hence, Corollary~\ref{cor:conditional_lsd_X} follows.
\end{proof}

\section*{Acknowledgements}
D.B. is grateful to Prof. Arup Bose for asking him this question.

\bibliographystyle{alpha}
\bibliography{main.bib}

\end{document}